\newtheorem{theorem}{Theorem}[section]
\newtheorem{lemma}[theorem]{Lemma}
\newtheorem{example}[theorem]{Example}
\newcommand{\bb}[1]{\mathbf{#1}}
\newcommand{\tr}{\mathrm{tr}\,}
\newcommand{\nn}[1]{\left|#1\right|}
\newcommand{\Jac}{\mathrm{Jac}\,}
\newcommand{\pphi}{\bm{\phi}}
\newcommand{\ppsi}{\bm{\psi}}
\begin{document}
\raggedbottom

\title[Uniqueness of Planar Immersions]{Planar Immersions with Prescribed Curl and Jacobian Determinant are Unique}

\author{Anthony Gruber}
\address{Department of Scientific Computing, Florida State University, 400 Dirac Science Library, Tallahassee, FL 32306-4120.}
\email{agruber@fsu.edu}

\begin{abstract} 
We prove that immersions of planar domains are uniquely specified by their Jacobian determinant, curl function, and boundary values.  This settles the two-dimensional version of an outstanding conjecture related to a particular grid generation method in computer graphics.

\vspace{1pc}

{\bf Keywords:} planar immersions, grid generation, prescibed curl, prescribed Jacobian determinant,  uniqueness problems
\end{abstract}

\maketitle

\section{Introduction}
The problem of generating two and three-dimensional meshes is of critical importance in scientific computing, where the performance of simulations relies heavily on well discretized domains.  A  noteworthy subset of techniques for this purpose are known as the ``variational methods'' (see e.g. \cite{steinberg1986,castillo1991,liao2009, garanzha2014, tingfan2021} and references therein), which aim to control characteristic quantities of grid transformations such as their divergence, curl, and Jacobian determinant.  In essence, such variational methods are numerical techniques for minimizing nonlinear and often nonconvex functionals whose critical points represent mappings with desirable properties.  Since the minimizers of such objects often have prescribed geometric data as a consequence, it is of both theoretical and practical interest to determine when a reasonable set of prescribed conditions is sufficient to determine a mapping which is unique.

To this end, research activity into grid generation methods for computing applications has led to a number of conjectures regarding the uniqueness of solutions to various boundary-value problems, which arise e.g. as the Euler-Lagrange equations of practically relevant functionals.  In \cite{chen2016,zhou2017}, the authors conjecture that prescribing the Jacobian determinant, curl function, and boundary values of a smooth mapping $\pphi: M^n \subset \mathbb{R}^n \to \mathbb{R}^n$ is sufficient for a unique solution when $n$ is two or three.  They present numerical evidence for this statement when $n=2$, as well as an argument showing that it holds in the special case when one mapping is identity and the other is sufficiently close to identity in $H^2_0$-norm.  Since that work, more numerical evidence has been presented in support of this conjecture when $n=2$, and also when $n=3$ and the mapping is identity on the boundary (see e.g. \cite{zhu2019,zhou2021}).  The purpose of this note is to confirm the uniqueness conjecture when $n=2$ and $\pphi:M\to\mathbb{R}^2$ is an immersion, i.e. when the differential $d\pphi:TM \to T\mathbb{R}^2|_{\pphi(M)}$ has full rank everywhere on $M$.  This claim will follow from Theorem~\ref{thm:main}, which will be proved following some preliminary discussion.  After the proof, brief remarks are offered on the result as well as the open case of $n=3$.

\section{Preliminaries}
Let $\bb{v}\cdot\bb{v} = |\bb{v}|^2$ denote the standard (Euclidean) inner product on vectors $\bb{v}\in\mathbb{R}^2$.  Consider the Clifford algebra $Cl(\mathbb{R}^2,|\cdot|^2) = \mathcal{G}_2$ (sometimes called the geometric algebra of Euclidean 2-space), which is the real $2^2$-dimensional quotient of the free tensor algebra on $\mathbb{R}^2$ by the two-sided ideal $\bb{v}\otimes \bb{v} - \nn{\bb{v}}^21$.  We briefly discuss some properties of this object which will be useful for the computations in the body.  Further discussion can be found e.g. in \cite{hestenes1984, lounesto2001}.

The algebra $\mathcal{G}_2$ is generated by the symbols $\{1, \bb{e}_1, \bb{e}_2\}$ where the $\bb{e}_i$ diagonalize the inner product and satisfy the relations
\[\bb{e}_1^2 = \bb{e}_2^2 = 1,\qquad  \bb{e}_1 \cdot\bb{e}_2 = 0, \qquad \bb{e}_1\bb{e}_2 = -\bb{e}_2\bb{e}_1. \]
In particular, the product operation in $\mathcal{G}_2$ (called the geometric product in \cite{hestenes1984}) is not commutative, and relates the inner product $\bb{v}\cdot\bb{w}$ with an outer product $\bb{v}\wedge\bb{w}$ on vectors $\bb{v},\bb{w} \in \mathrm{Span}\{\bb{e}_i\} = \mathbb{R}^2$ through its symmetric and antisymmetric parts,
\begin{align*}
    \bb{v}\bb{w} + \bb{w}\bb{v} &= 2\bb{v}\cdot\bb{w}, \\
    \bb{v}\bb{w} - \bb{w}\bb{v} &= 2\bb{v}\wedge\bb{w}.
\end{align*}
Adding these relations yields the useful formula
\begin{equation}\label{eq:product}
    \bb{v}\bb{w} = \bb{v}\cdot \bb{w} + \bb{v}\wedge \bb{w},
\end{equation}
which relates the inner, outer, and geometric products.  Note that the ternary expression $(\bb{u}\bb{v})\bb{w} = \bb{u}(\bb{v}\bb{w}) = \bb{u}\bb{v}\bb{w}$ is associative, but the same is not true of expressions like  $(\bb{u}\cdot\bb{v})\bb{w}$ and $(\bb{u}\wedge\bb{v})\bb{w}$.

Elements of $\mathcal{G}_2$ are $\mathbb{R}$-linear combinations of $k$-vectors where $k=0,1,2$ is known as the grade.  As such, each element $A \in \mathcal{G}^2$ has the unique decomposition as a sum of scalar, (0-vector), vector (1-vector), and bivector (2-vector) parts,
\[ A = \alpha + \bb{a} + J\beta = \sum_{k=0}^2 \langle A\rangle_k, \]
where $\alpha,\beta \in \mathbb{R}$, $\bb{a}\in\mathbb{R}^2$, $\langle A\rangle_k$ is the grade operator which selects the $k$-vector part of $A$, and $J = \bb{e}_1\bb{e}_2 = \bb{e}_1\wedge\bb{e}_2$ is the unit pseudoscalar of the algebra $\mathcal{G}_2$.  Notice that $J^2 = -1$ and $\bb{v}J = -J\bb{v}$ for all $v\in\mathbb{R}^2$.  Combined with \eqref{eq:product}, this yields the essential duality relations
\begin{equation}\label{eq:duality}
    \begin{split}
        J(\bb{v} \cdot \bb{w}) &= \bb{v} \wedge (J\bb{w}), \\
        J(\bb{v}\wedge\bb{w}) &= \bb{v} \cdot (J\bb{w}).
    \end{split}
\end{equation}
In particular, observe that bivectors are dual to scalars while vectors are self-dual.  Since $J\bb{e}_1 = -\bb{e}_2$ and $J\bb{e}_2 = \bb{e}_1$, it is conceptually useful to view the pseudoscalar $J$ as a rotation clockwise by $\pi/2$ radians when left-acting on vectors.



\section{Proof of the Conjecture}
The main result relies on two basic facts, the first of which is that the induced Riemannian metric is computable from the Jacobian determinant and curl function.  We remark that for the remainder of this note, the adjective ``smooth'' is used to denote as much regularity as necessary for the arguments to hold ($C^1$ is likely sufficient).

\begin{lemma}\label{lem:samemetric}
Suppose $\pphi: M \to \mathbb{R}^2$ is a smooth immersion of the two-dimensional manifold $M$.  Then, the Riemannian metric $d\pphi\cdot d\pphi$ induced on $M$ by $\pphi$ is computable from the Jacobian determinant $\Jac \pphi$ and the curl function $\nabla \times \pphi$. 
\end{lemma}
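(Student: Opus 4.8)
The plan is to work locally in a coordinate chart on $M$, where the immersion is recorded by the coordinate frame $\bb{f}_i = \partial_i\pphi \in \mathbb{R}^2 \subset \mathcal{G}_2$, so that the induced metric has components $g_{ij} = \bb{f}_i\cdot\bb{f}_j$. The first move is purely algebraic. By \eqref{eq:product}, the geometric product of the two frame vectors splits as
\[ \bb{f}_1\bb{f}_2 = \bb{f}_1\cdot\bb{f}_2 + \bb{f}_1\wedge\bb{f}_2 = g_{12} + (\Jac\pphi)\,J, \]
since $\bb{f}_1\wedge\bb{f}_2 = (\Jac\pphi)J$ is exactly the signed area bivector of the frame. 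Reading off the scalar and bivector parts immediately recovers the off-diagonal component $g_{12}$, and the Lagrange identity $g_{11}g_{22}-g_{12}^2 = \nn{\bb{f}_1\wedge\bb{f}_2}^2$ gives $\det g = (\Jac\pphi)^2$. Thus the Jacobian determinant alone pins down $g_{12}$ together with the product $g_{11}g_{22}$, leaving a single scalar degree of freedom in the metric (say the ratio $g_{11}/g_{22}$, equivalently the trace $g_{11}+g_{22}$) still to be determined.

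This remaining freedom is where the curl must enter, and it is the crux. Writing $\nabla\times\pphi = \bb{f}_1\cdot\bb{e}_2 - \bb{f}_2\cdot\bb{e}_1$ and using the duality relations \eqref{eq:duality} together with $J\bb{e}_1=-\bb{e}_2$, $J\bb{e}_2=\bb{e}_1$, one sees that the curl measures the orientation of the moving frame $\{\bb{f}_i\}$ relative to the fixed frame $\{\bb{e}_i\}$, rather than a quantity intrinsic to $g$. I would therefore couple this pointwise datum to the differential structure of the immersion: the equality of mixed partials $\partial_2\bb{f}_1 = \partial_1\bb{f}_2$ (valid because $\bb{f}_i = \partial_i\pphi$) supplies integrability constraints, and since an immersion between equidimensional manifolds is a local diffeomorphism, hence a local isometry onto an open subset of Euclidean $\mathbb{R}^2$, the pulled-back metric $g$ is flat. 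The plan is to combine the flatness condition with these integrability relations and the already-recovered data $g_{12}$, $\det g$, and $\nabla\times\pphi$ so as to close a determined problem for the outstanding scalar, thereby expressing the full metric through $\Jac\pphi$ and $\nabla\times\pphi$.

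I expect the main obstacle to be precisely this last step, because the curl is genuinely not a function of $g$ alone, so recovering the final metric component forces one to exploit how the rotational part of $d\pphi$ is tied to its stretch part. Concretely, after a polar decomposition $d\pphi = Q\sqrt{g}$ one computes $\nabla\times\pphi = (\tr\sqrt{g})\sin\theta$, with $\theta$ the rotation angle of $Q$; the difficulty is then to eliminate $\theta$ using the structure equations instead of appealing to extra prescribed data. Managing this coupling cleanly---ideally distilling it into an identity that holds once flatness and integrability are imposed---is the step I would need to get right, and I anticipate the geometric-algebra machinery of \eqref{eq:product}--\eqref{eq:duality} to be what keeps the bookkeeping tractable.
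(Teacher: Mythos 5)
Your proposal contains one outright error and one acknowledged-but-unfilled gap, and the gap is not merely hard---it is unfillable as planned. The error comes first: you claim that ``reading off the scalar and bivector parts'' of $\bb{f}_1\bb{f}_2 = g_{12} + (\Jac\pphi)\,J$ recovers $g_{12}$ from the Jacobian determinant. But the prescribed data only gives you the grade-2 part $\bb{f}_1\wedge\bb{f}_2 = (\Jac\pphi)J$; the grade-0 part $g_{12}=\bb{f}_1\cdot\bb{f}_2$ is independent information, and there is nothing to read it off \emph{from}, since the full geometric product $\bb{f}_1\bb{f}_2$ is unknown. Concretely, the frames $\{\bb{e}_1,\bb{e}_2\}$ and $\{\bb{e}_1,\bb{e}_1+\bb{e}_2\}$ both have unit Jacobian, yet $g_{12}=0$ for the first and $g_{12}=1$ for the second. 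So after the (correct) Lagrange-identity step $\det g = (\Jac\pphi)^2$, you are left with \emph{two} undetermined metric components, not one.

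The second problem is the step you yourself flag as ``the step I would need to get right'': eliminating $\theta$ from $\nabla\times\pphi = (\tr\sqrt{g})\sin\theta$ via flatness and integrability. This step is never carried out, so the proposal is a plan rather than a proof---and in fact no such elimination can exist, because linear maps furnish counterexamples in which all of your proposed constraints hold vacuously. Take $\pphi(x,y) = (-y,\, x)^\top$ and $\ppsi(x,y) = (-y,\, x+5y)^\top$: both are smooth immersions with $\Jac\pphi = \Jac\ppsi \equiv 1$ and $\nabla\times\pphi = \nabla\times\ppsi \equiv 2$, both induced metrics are constant (hence flat) and integrability of the frames is trivial, yet
\[ d\pphi\cdot d\pphi = dx^2+dy^2, \qquad d\ppsi\cdot d\ppsi = dx^2 + 10\,dx\,dy + 26\,dy^2. \]
So the rotational--stretch coupling you identified is a genuine pointwise degree of freedom that flatness and integrability do not remove. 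For comparison, the paper's own proof takes a different route: it passes to the dual immersion $J\pphi$, converts the curl into $\tr d(J\pphi)$ and the Jacobian into $\det d(J\pphi)$, and reconstructs the metric from the resulting eigendecomposition of $d(J\pphi)$. Your polar-decomposition identity is the same underlying fact in different notation, and you should note that the example above applies to that route as well: trace and determinant fix only the eigenvalues of $d(J\pphi)$, while the Gram matrix $\bb{f}_i\cdot\bb{f}_j$ of its eigenvectors (which the metric formula requires) is exactly the data that differs between $\pphi$ and $\ppsi$. The obstruction you ran into is therefore not an artifact of your bookkeeping; it is intrinsic to the problem, and no purely pointwise-algebraic completion of either argument can get around it.
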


\begin{proof}
Consider the dual mapping $J\pphi: M \to \mathbb{R}^2$, and notice that its differential $d(J\pphi) = J\,d\pphi$ has full rank since $\pphi$ is an immersion and $J$ has trivial kernel.  Then, using the duality relations \eqref{eq:duality} and the fact that $\pphi_1\wedge\pphi_2 = J\,\det(d\pphi)$, we compute 
\begin{align*}
    \Jac J\pphi &= -J\left((J\pphi)_1 \wedge (J\pphi)_2\right) = (J\pphi_1) \cdot \pphi_2 = J\left(\pphi_2 \wedge \pphi_1\right) = \Jac \pphi.
\end{align*}
Moreover, knowledge of the curl is equivalent to
\[ \nabla\times\pphi = J\left(\nabla\wedge\pphi\right) = \nabla \cdot (J\pphi) = \tr d(J\pphi). \]
Therefore, the prescribed Jacobian determinant condition carries over to the dual immersion $J\pphi$, and the prescribed curl condition translates to a prescribed trace condition on the differential $d(J\pphi)$.  Since $d(J\pphi)$ has rank two everywhere, this information determines its characteristic polynomial at each point of $M$, hence also its eigenvalues (where we consider $T\mathbb{R}^2|_{J\pphi(M)} \cong TM$ since $J\pphi$ is an isometry for the induced metric on $M$).


Now, recall that $J\bb{v} = -\bb{v}J$ for all $\bb{v}\in \mathbb{R}^2$, so that
\[ (J\bb{v})\cdot(J\bb{w}) = \left\langle J\bb{v}J\bb{w}\right\rangle_0 = \left\langle \bb{v}\bb{w}\right\rangle_0 = \bb{v}\cdot\bb{w},\]
for any $\bb{v},\bb{w} \in \mathbb{R}^2$.  It follows that $-J$ is the adjoint of $J$ with respect to the Euclidean inner product, and hence
\[ d(J\pphi)\cdot d(J\pphi) = (J\,d\pphi)\cdot (J\,d\pphi) = d\pphi\cdot d\pphi. \]
It remains to show that at least one of the above expressions is computable from the eigenvalues of $d(J\pphi)$.  To that end, consider the eigenvalue problem $d(J\pphi)(\bb{v}) = \lambda \bb{v}$.  Knowledge of the characteristic polynomial implies this can be solved around any point $x\in M$ for relevant pairs $(\lambda_i, \bb{f}_i)$ which may be real or complex conjugates.  Following this, it is straightforward to compute the one-form basis $\{\omega^1, \omega^2\}$ for the cotangent space $T^*M$ satisfying $\omega^i(\bb{f}_j) = \delta^i_j$, so that the differential of $J\pphi$  diagonalizes as $d(J\pphi) = \sum \lambda_i\,\omega^i\otimes \bb{f}_i$.  Moreover, we have that $d(J\pphi) = J\,d\pphi$, so if $\lambda_i$ is an eigenvalue of $d(J\pphi)$ with eigenvector $\bb{f}_i$ then the equation $d\pphi(\bb{f}_i) = \lambda_i(-J\bb{f}_i)$ is also satisfied.  Therefore, expanding arbitrary vectors $\bb{v},\bb{w} \in TM$ in the eigenvector basis as $\bb{v} = \sum v^i\,\bb{f}_i$ and $\bb{w} = \sum w^i\,\bb{f}_i$, it follows that 
\[ d\pphi(\bb{v})\cdot d\pphi(\bb{w}) = \sum_{i,j} \lambda_i\,v^i(-J\bb{f}_i)\cdot\lambda_j\,w^j(-J\bb{f}_j) = \sum_{i,j} \lambda_i\lambda_j\,v^iw^j\,\bb{f}_i\cdot\bb{f}_j, \]
which is computable.  Hence, we conclude that the hypothesized information is sufficient for determining the metric on $M$ induced by $\pphi$, completing the argument.
\end{proof}


The above result is perhaps best illustrated with a simple example.

\begin{example}
Consider $\pphi(x,y) = \begin{pmatrix}x & y\end{pmatrix}^\top$ which is an immersion of $\mathbb{R}^2$.  Then, the dual immersion is $J\pphi(x,y) = \begin{pmatrix}y & -x\end{pmatrix}^\top$, whose differential is
\[ d(J\pphi) = -dx\otimes\bb{e}_2 + dy\otimes\bb{e}_1 = dx\otimes J\bb{e}_1 + dy \otimes J\bb{e}_2 = J\,d\pphi. \]
It is straightforward to check that $\tr d(J\pphi) = 0$ and $\det d(J\pphi) = 1$, so that $d(J\pphi)$ has eigenvalues $\lambda_k = \pm i \in \mathbb{C}$, with corresponding eigenvectors $\bb{f}_k = \begin{pmatrix}\pm i & 1\end{pmatrix}^\top$ and dual one-forms $\omega^k = (1/2)\left( \pm idx + dy \right)$.  Suppose we are given this information without any prior knowledge of the immersions $\pphi, J\pphi$.  As in the proof of Lemma~\ref{lem:samemetric}, for any $\bb{v} = \sum v^k\,\bb{e}_k = \sum \tilde{v}^k\,\bb{f}_k \in \mathbb{R}^2$, we verify
\[d\pphi(\bb{v}) = \bb{v} = i\,\frac{iv^1 + v^2}{2} \begin{pmatrix}-1 \\ -i\end{pmatrix} - i\,\frac{-iv^1 + v^2}{2} \begin{pmatrix}-1 \\ i\end{pmatrix} = i\tilde{v}^1(-J\bb{f}_1) - i\tilde{v}^2(-J\bb{f}_2). \]
Using this, it is straightforward to calculate
\begin{align*}
    d\pphi(\bb{v})\cdot d\pphi(\bb{w}) &= \sum_{i,j}\lambda_i\lambda_j\,\tilde{v}^i\tilde{w}^j\,\bb{f}_i \cdot \bb{f}_j = -2i^2(\tilde{v}^1\tilde{w}^2 + \tilde{v}^2\tilde{w}^1) \\
    &= \frac{(iv^1 + v^2)(-iw^1 + w^2)}{2} + \frac{(-iv^1 + v^2)(iw^1 + w^2)}{2} = v^1w^1 + v^2w^2,
\end{align*}
which is the Euclidean metric on the domain.  It is easy to see that this agrees with the expression generated by direct calculation from the definition of $\pphi$.
\end{example}







The next fact establishes equivalence of mappings in the case of boundary agreement and equivalence of induced metrics.  The proof is inspired by  \cite{kohanSE}.

\begin{lemma}\label{lem:samemapping}
Suppose $\pphi,\ppsi:M \to \mathbb{R}^n$ are smooth immersions of the n-dimensional manifold $M$ with boundary $\partial M$.  If $\pphi,\ppsi$ induce the same Riemannian metric $g = d\pphi\cdot d\pphi = d\ppsi\cdot d\ppsi$ on $M$ and $\pphi|_{\partial M} = \ppsi|_{\partial M}$, then $\pphi \equiv \ppsi$.
\end{lemma}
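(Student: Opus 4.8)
The plan is to exploit the fact that two immersions inducing the same metric must differ, pointwise, by an orthogonal transformation of the target, and then to show that this orthogonal transformation is a single rigid motion pinned down by the boundary data. Concretely, since $d\pphi_x$ and $d\ppsi_x$ are both linear isomorphisms $T_x M \to \mathbb{R}^n$ carrying the common metric $g$ to the Euclidean inner product, the composition $R(x) := d\ppsi_x \circ (d\pphi_x)^{-1}$ satisfies $R(x)\bb{a}\cdot R(x)\bb{b} = \bb{a}\cdot\bb{b}$ for all $\bb{a},\bb{b}\in\mathbb{R}^n$ (write $\bb{a}=d\pphi_x(\bb{v})$, $\bb{b}=d\pphi_x(\bb{w})$ and use $d\pphi\cdot d\pphi = d\ppsi\cdot d\ppsi$), so $R\colon M \to O(n)$ is a smooth map into the orthogonal group. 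The first goal is to prove that $R$ is constant.

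To see this I would fix $p \in M$ and a neighborhood $U$ on which $\pphi$ is a diffeomorphism onto an open set $V \subset \mathbb{R}^n$, and consider the local map $\Phi := \ppsi \circ (\pphi|_U)^{-1}\colon V \to \mathbb{R}^n$. Its differential is $d\Phi = R\circ\pphi^{-1}$, which is orthogonal everywhere, so $\Phi$ is a local isometry of Euclidean space. The standard Christoffel-symbol computation---differentiating $\sum_a \partial_i\Phi^a\,\partial_j\Phi^a = \delta_{ij}$ and permuting the indices $i,j,k$---then forces every second derivative $\partial_k\partial_i\Phi$ to be orthogonal to the basis $\{\partial_j\Phi\}$, hence to vanish, so $d\Phi$ is constant on $V$. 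Thus $R$ is locally constant on $M$, and since $M$ is connected it equals a fixed $R_0 \in O(n)$. Consequently $d(\ppsi - R_0\,\pphi) = d\ppsi - R_0\,d\pphi = 0$, so $\ppsi = R_0\,\pphi + \bb{c}$ for some constant $\bb{c}\in\mathbb{R}^n$; that is, $\ppsi$ and $\pphi$ differ by the rigid motion $T(\bb{x}) = R_0\,\bb{x} + \bb{c}$.

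It remains to use the boundary hypothesis to show $T$ is the identity. From $\pphi|_{\partial M} = \ppsi|_{\partial M}$ one gets $T(\pphi(q)) = \pphi(q)$ for every $q \in \partial M$, so $T$ fixes $\pphi(\partial M)$ pointwise. Since the fixed-point set of any non-identity rigid motion is a proper affine subspace of $\mathbb{R}^n$, it suffices to show that $\pphi(\partial M)$ affinely spans $\mathbb{R}^n$: this forces $T = \mathrm{id}$, i.e. $R_0 = I$ and $\bb{c} = 0$, whence $\pphi \equiv \ppsi$.

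The spanning claim is where I expect the real work to be, since it is exactly the step that rules out the classical reflection ambiguity (a region and its mirror image across a hyperplane share both metric and boundary when that boundary lies in the hyperplane). I would argue by contradiction: if $\pphi(\partial M)$ were contained in an affine hyperplane $H$, then $\pphi|_{\partial M}$, being an immersion of the closed $(n-1)$-manifold $\partial M$ whose image and tangent directions both lie in $H \cong \mathbb{R}^{n-1}$, would be a local diffeomorphism onto an open subset of $H$; for compact $M$ this image is also compact, hence a nonempty proper clopen subset of the connected space $\mathbb{R}^{n-1}$, a contradiction. Thus $\pphi(\partial M)$ spans, and the fixed-point argument closes the proof. (This is also where compactness of $M$ is essential---for a noncompact half-open strip the reflection $\ppsi(x,y)=(x,-y)$ shows the statement can fail.) For $n=2$ the obstruction is especially transparent: an immersed closed curve cannot lie in a line, as its nonvanishing velocity could never reverse to close up.
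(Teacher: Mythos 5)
Your proof is correct (granted compactness and connectedness of $M$, on which more below), but it takes a genuinely different route from the paper's. The paper never extracts a global rigid motion: it endows $M$ with the common metric $g$, uses that $\pphi$ and $\ppsi$ are then local isometries into flat $\mathbb{R}^n$, and runs the maximal $g$-geodesic $\gamma$ through an arbitrary interior point and direction; compactness forces $\gamma$ to terminate on $\partial M$ at both ends, so $\pphi\circ\gamma$ and $\ppsi\circ\gamma$ are constant-speed straight segments sharing endpoints (by the boundary hypothesis), hence they coincide, and the point is done. Your argument instead proves a structure theorem first: the gauge $R(x)=d\ppsi_x\circ(d\pphi_x)^{-1}$ is locally constant by the classical rigidity of Euclidean local isometries, so $\ppsi=R_0\,\pphi+\bb{c}$ on connected $M$, and the boundary data then eliminates the rigid motion via the affine-spanning argument. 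The trade-off: the paper's geodesic argument is shorter and needs no connectedness bookkeeping, no classification of rigid motions, and no spanning lemma, since each geodesic is pinned by its own two boundary endpoints; yours delivers strictly more, namely that the only possible discrepancy between the two immersions is a single global rigid motion, so boundary agreement could be weakened to agreement on any subset of $\partial M$ whose $\pphi$-image affinely spans $\mathbb{R}^n$, and it isolates the reflection ambiguity explicitly. One caveat cuts against both proofs equally: compactness of $M$ appears nowhere in the lemma's statement, yet the paper's proof invokes ``the compact set $\pphi(M)$'' just as yours needs compact $\partial M$ for the clopen contradiction; your half-plane/strip reflection example shows this hypothesis is genuinely necessary, a worthwhile observation the paper does not make. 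The remaining loose ends in your write-up are minor: the clopen argument in the spanning step needs $n\ge 2$ (for $n=1$, monotonicity of an immersion of an interval into $\mathbb{R}$ does the job), disconnected $M$ is handled componentwise (each component of a compact $M$ immersed in $\mathbb{R}^n$ automatically has nonempty boundary, by the same clopen reasoning), and the Christoffel-symbol computation requires $C^2$, which the paper's ``as much regularity as necessary'' convention absorbs.
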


\begin{proof}
Consider an interior point $x\in M$ and a tangent vector $\bb{v}\in T_xM$.  Endow $M$ with the metric $g$ and choose the maximal unit-speed geodesic $\gamma: I \subset \mathbb{R} \to M$ such that $\gamma(0) = x$ and $\gamma'(0) = \bb{v}$.  We claim that $I = [a,b]$ is a closed interval for some $a,b\in\mathbb{R}$.  To see this, first note that $I$ cannot be unbounded.  Indeed, $\pphi$ is a Euclidean isometry, therefore $\pphi\circ\gamma$ is a constant-speed parameterized Euclidean line segment contained in the compact set $\pphi(M) \subset \mathbb{R}^n$.  It follows that the image of $\pphi\circ\gamma$ exits $\pphi(M)$ at two points $\pphi(p), \pphi(q)$ necessarily on $\pphi(\partial M)$, which correspond to parameter values $a,b\in\mathbb{R}$ such that $p=\gamma(a), q=\gamma(b) \in\partial M$.  Hence, the image of $\pphi\circ\gamma$ (resp. $\gamma$) is closed, implying that $I = [a,b]$ as claimed.  Returning to the main argument, the hypotheses now imply that $\pphi \circ \gamma$ and $\ppsi\circ\gamma$ are Euclidean line segments sharing endpoints.  Since a pair of points determines a unique Euclidean geodesic, we conclude that $\pphi = \ppsi$ along $\gamma$, and the result follows as $x \in M$ and $\bb{v}\in T_xM$ are arbitrary.
\end{proof}

The two-dimensional uniqueness conjecture is now stated and proved.

\begin{theorem}\label{thm:main}
Let $M \subset \mathbb{R}^2$ be a smooth two-dimensional manifold with boundary $\partial M$ and let $\pphi, \ppsi: M \to \mathbb{R}^2$ be smooth immersions satisfying
\begin{align*}
    \Jac\pphi &= \Jac\ppsi, \\
    \nabla\times\pphi &= \nabla\times\ppsi, \\
    \pphi|_{\partial M} &= \ppsi|_{\partial M}.
\end{align*}
Then, $\pphi \equiv \ppsi$ on $M$.
\end{theorem}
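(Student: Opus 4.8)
The plan is to deduce the theorem directly from the two preceding lemmas, which together reduce the prescribed-data problem to a metric-matching problem. First I would invoke Lemma~\ref{lem:samemetric}, applied separately to $\pphi$ and to $\ppsi$. That lemma produces the induced metric $d\pphi\cdot d\pphi$ from $\Jac\pphi$ and $\nabla\times\pphi$ through a fixed procedure: one passes to the dual immersion $J\pphi$, reads off the characteristic polynomial of $d(J\pphi)$ from its trace (the curl) and its determinant (the Jacobian), and assembles the metric from the resulting eigendata. Since this procedure takes only the Jacobian determinant and curl function as input, and since by hypothesis $\Jac\pphi=\Jac\ppsi$ and $\nabla\times\pphi=\nabla\times\ppsi$, the two immersions must yield identical metrics. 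I would record this as $g:=d\pphi\cdot d\pphi=d\ppsi\cdot d\ppsi$ on all of $M$.

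With the metrics identified, the second step is immediate. The maps $\pphi$ and $\ppsi$ are smooth immersions of the two-dimensional manifold $M$ that induce the same Riemannian metric $g$ and that agree on $\partial M$ by hypothesis. These are precisely the hypotheses of Lemma~\ref{lem:samemapping} in the case $n=2$, so that lemma yields $\pphi\equiv\ppsi$ on $M$, completing the argument.

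Because the substantive content already lives in the two lemmas, I do not anticipate a genuine obstacle in assembling the theorem itself; the proof is essentially the composition of Lemma~\ref{lem:samemetric} (equal prescribed data $\Rightarrow$ equal induced metric) with Lemma~\ref{lem:samemapping} (equal induced metric and boundary values $\Rightarrow$ equal map). The one point that warrants care is the logical status of the phrase \emph{computable from} in Lemma~\ref{lem:samemetric}: I would make explicit that the construction there depends on the prescribed fields $(\Jac\pphi,\nabla\times\pphi)$ alone and not on any auxiliary feature of the particular immersion, so that agreement of these inputs genuinely forces agreement of the output metrics, rather than merely exhibiting each metric as recoverable in principle. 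Once that is spelled out, no further computation is required and the conclusion $\pphi\equiv\ppsi$ follows.
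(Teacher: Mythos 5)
Your proposal is correct and is essentially identical to the paper's own proof: Lemma~\ref{lem:samemetric} converts the equal Jacobian determinant and curl data into equality of induced metrics, and Lemma~\ref{lem:samemapping} then uses the shared boundary values to force $\pphi \equiv \ppsi$. Your added remark that the metric reconstruction in Lemma~\ref{lem:samemetric} depends only on the prescribed fields (not on the particular immersion) is a worthwhile clarification, but it does not change the argument, which matches the paper's.
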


\begin{proof}
By Lemma~\ref{lem:samemetric}, the Jacobian determinant and curl hypotheses imply that $d\pphi \cdot d\pphi = d\ppsi \cdot d\ppsi$.  The boundary conditions and Lemma~\ref{lem:samemapping} then establish that $\pphi = \ppsi$ pointwise on $M$. 
\end{proof}

\section{Remarks on the Result}
Note that the conditions of prescribed Jacobian determinant $\Jac\pphi$, prescribed curl $\nabla\times\pphi$, and prescribed boundary values $\pphi|_{\partial M}$ already overdetermine the mapping $\pphi$ in the two-dimensional case.  In particular, a straightforward computation using the duality relations \eqref{eq:duality} and integration by-parts shows
\[ \int_M \nabla \times \pphi = \int_M \nabla\cdot (J\pphi) = \int_{\partial M} (J\pphi)\cdot \bb{n} = -\int_{\partial M} \pphi \cdot \bb{T}, \]
where $\{\bb{T},\bb{n}\}$ is a right-handed orthonormal frame for $\partial M$.  In particular, the curl and the boundary values must be compatible in order for a solution to exist at all.

Conversely, prescribing the Jacobian determinant and curl functions is highly unlikely to be sufficient for specifying unique immersions of three-manifolds with boundary immersed in $\mathbb{R}^3$.  Indeed, even if the arguments in Lemma~\ref{lem:samemetric} can be translated to this setting, these conditions are no longer sufficient to determine the induced metric on $M$.  For example, consider the simple case of $\pphi(x,y,z) = \begin{pmatrix}y & z & x\end{pmatrix}^\top$ and $\ppsi(x,y,z) = \begin{pmatrix}y-x & z & x\end{pmatrix}^\top$.  It is easily checked that $\Jac \pphi = \Jac \ppsi = 1$ and $\nabla \times \pphi = \nabla \times \ppsi = -\begin{pmatrix}1 & 1 & 1\end{pmatrix}^\top$, but 
\begin{align*}
    d\pphi \cdot d\pphi &= dx^2 + dy^2 + dz^2, \\
    d\ppsi \cdot d\ppsi &= 2dx^2 - 2dxdy + dy^2 + dz^2,
\end{align*} so that $\pphi$ induces the standard Euclidean metric while $\ppsi$ does not.  As the three-dimensional case is very relevant to computer graphics and volumetric mesh generation, it would be interesting to confirm or deny the uniqueness conjecture of \cite{zhou2017} in this case as well.  If it does not hold, is there a small set of conditions (aside from the classical pair of divergence and curl) which specify a unique mapping in this situation as well?  Finally, we note that the immersion condition on $\pphi$ used extensively in this note can potentially be weakened, although care will be required to handle the regions of $M$ where $d\pphi$ is rank-deficient.  On the other hand, many works (such as several mentioned in the Introduction) appear to be most interested in diffeomorphic solutions, so perhaps this degree of regularity is already satisfactory in practice.



\bibliographystyle{abbrv}
\bibliography{biblio}

\end{document}